\newtheorem{theorem}{Theorem}[section]
\newtheorem{definition}{Definition}[section]
\newtheorem{remark}{Remark}[section]
\newtheorem{example}{Example}[section]
\newcommand{\be}{\begin{equation}}
\newcommand{\ee}{\end{equation}}
\numberwithin{equation}{section}
\newcommand{\bea}{\begin{eqnarray}}
\newcommand{\eea}{\end{eqnarray}}
\newcommand{\beb}{\begin{eqnarray*}}
\newcommand{\eeb}{\end{eqnarray*}}
\begin{document}
\title{Rough ideal convergence in a partial metric space}
\author{Sukila Khatun$^{1}$, Amar Kumar Banerjee$^{2}$ and Rahul Mondal$^{3}$}
\address{$^{1}$,$^{2}$ Department of Mathematics, The University of Burdwan,
Golapbag, Burdwan-713104, West Bengal, India.} 
\address{$^{3}$Department of Mathematics, Vivekananda Satavarshiki Mahavidyalaya, Manikpara, Jhargram -721513, West Bengal, India.}
\email{$^{1}$sukila610@gmail.com}
\email{$^{2}$akbanerjee@math.buruniv.ac.in, akbanerjee1971@gmail.com}
\email{$^{3}$imondalrahul@gmail.com}
\begin{abstract}
In this paper, using the concept of ideal, we study the idea of rough ideal convergence of sequences which is an extension of the notion of rough convergence of sequences in a partial metric space. We define the set of rough $\mathcal{I}$-limit points and the set of rough $\mathcal{I}$-cluster points and then we prove some relevant results associated with these sets.  
\end{abstract}
\subjclass[2020]{40A05, 40G15.}
\keywords{Ideal, partial metric spaces, $\mathcal{I}$-convergence, rough convergence, rough $\mathcal{I}$-convergence, rough $\mathcal{I}$-limit points.}
\maketitle
\section{\bf{Introduction }}
The notion of statistical convergence \cite{HF, {HS}} occupies a prominent role in the literature of summability theory as a generalization of ordinary convergence. Since then many types of generalizations and applications of statistical convergence are being carried out. The notion of ideal convergence \cite{PK3} appeared as a generalizations of both ordinary and statistical convergence got an immense importance in least two decades when topologists turned their attention on generalization of convergence in a more sophisticated way. Thereafter lot of works by several authors \cite{ {AKBPAL}, {KD}, {PD}, {PK2}, {BK}} were carried out in this direction. The concept of $\mathcal{I}$-convergence was developed to many areas by several researchers \cite{{KD}, {PD}, {PK2}}. \\
H. X. Phu \cite{{PHU}, {PHU1}} introduced the notion of rough convergence in a finite dimensional normed linear space and also in infinite dimensional normed spaces. The notion of rough convergence has interesting applications that occurs naturally in numerical analysis. The idea of rough convergence extended into rough statistical convergence using the notion of natural density by Ayter \cite{AYTER1}. Again, the idea of rough statistical convergence extended into rough ideal convergence using the concept of ideals by S. K. Pal, D. Chandra and S. Dutta \cite{PAL}.
Several works were done by many authors \cite{{RMROUGH}, {DR}, {NH}, {PMROUGH1}, {SUK1}} in many generalized spaces. \\
In 1994, the notion of partial metric space was introduced by S. Matthews \cite{MATW}, which is a generalization of a metric space. Recently many works \cite{ {SUK2}, {SUK4}, {DUN}, {FN}} have been carried out in partial metric spaces. \\
In our present study, we discuss the idea of rough ideal convergence in a partial metric space. We define the set of rough ideal limit points and prove several properties like boundedness, closedness etc of this set. We also prove several results associated with this set.

\section{\bf{Preliminaries}}

\begin{definition}\cite{BMW}
A partial metric on a non-empty set $X$ is a function $p: X\times X \longrightarrow [0, \infty)$ such that for all $x,y,z \in X$:\\
$(p1)$ $0 \leq p(x,x) \leq p(x,y)$ (nonnegativity and small self-distances),\\
$(p2)$ $x=y \Longleftrightarrow p(x,x)=p(x,y)=p(y,y)$ (indistancy both implies equality),\\
$(p3)$  $p(x,y)= p(y,x)$ (symmetry),\\
$(p4)$  $p(x,y) \leq p(x,z) + p(z,y) - p(z,z)$ (triangularity).\\
Then the pair $(X,p)$ is said to be a partial metric space.

Properties and examples of partial metric spaces were widely discussed in \cite{BMW}.
\end{definition}

\begin{definition} \cite{BMW}
In a partial metric space $(X,p)$, for $r>0$ and $x \in X$ we define the open and closed ball of radius $r$ and center $x$ respectively as follows  :
\begin{center}
    $B^{p}_{r}(x)=\{ y \in X : p(x,y)<p(x,x)+r  \}$ \\
$\overline{B^{p}_{r}}(x)=\{ y \in X : p(x,y) \leq p(x,x)+r \}.$ 
\end{center}
\end{definition}

\begin{definition} \cite{BMW}
Let $(X,p)$ be a partial metric space. A subset $U$ of $X$ is said to be a bounded in $X$ if there exists a positive real number $M$ such that $sup$ $\{ p(x,y): x,y \in U\}< M$.
\end{definition}

\begin{definition} \cite{BMW}
Let $(X,p)$ be a partial metric space and $\{x_{n}\}$ be a sequence in $X$. Then $\{x_{n}\}$ is said to converge to $x \in X$ if and only if $lim_{n\to\infty}p(x_{n},x)=p(x,x)$; i.e., if for each $\epsilon > 0$ there exists $k \in \mathbb{N}$ such that 
$|p(x_{n},x)-p(x,x)|< \epsilon$ for all $n \geq k$.
\end{definition}

\begin{definition} \cite{PK1}
Let $X$ be a non-empty set. 
Then a family of sets $\mathcal{I} \subset 2^X$ is said to be an ideal if \\
$(i) \phi \in \mathcal{I}$ \\
$(ii) A,B \in \mathcal{I} \Rightarrow A \cup B \in \mathcal{I}$ \\
$(iii) A \in \mathcal{I}, B \subset A \Rightarrow B \in \mathcal{I}$. \\ 
$\mathcal{I}$ is called non-trivial ideal if $X \notin \mathcal{I}$ and $\mathcal{I} \neq \{\phi\}$. A non-trivial ideal $\mathcal{I}$ in $X$ is called admissible if ${x} \in \mathcal{I}$ for each $x \in X$. Clearly the family $\mathcal{F(I)}=\{A \subset X: X \setminus A \in \mathcal{I}\}$ is a filter on $X$ which is called the filter associated with $\mathcal{I}$.
\end{definition}

\begin{definition} \cite{PK1}
    Let $\mathcal{I}$ be a non-trivial ideal of $\mathbb N$. A sequence $\{x_n\}$ in $\mathbb R$ is said to be $\mathcal{I}$-convergent to $x$ if for every $\varepsilon>0$, the set $A(\varepsilon)=\{n \in \mathbb N: |x_n-x| \geq \varepsilon\} \in \mathcal{I}$. \\
    If $\{x_n\}$ is $\mathcal{I}$-convergent to $x$, then $x$ is called $\mathcal{I}$-limit of $\{x_n\}$and we write $\mathcal{I}-lim \ x_n=x$.
\end{definition}

\begin{definition} \cite{PK1}
    Let $\mathcal{I}$ be an admissible ideal in $\mathbb N$. A sequence $\{x_n\}$ of real numbers is said to be $\mathcal{I^*}$-convergent to $x$ (shortly $\mathcal{I^*}-lim \ x_n=x$) if there is a set $A \in \mathcal{I}$, such that for $M=\mathbb N \setminus A=\{m_1 < m_2 < ... \}$, we have
    $lim_{k\to\infty}x_{m_k}=x$.
\end{definition}

\begin{definition} \cite{DUN}
Let  $\mathcal{I}$ be a non-trivial admissible ideal of $\mathbb N$. A sequence $\{x_n\}$ in a partial metric space $(X,p)$ is said to be ideal convergent ($\mathcal{I}$-convergent) to $x \in X$ if for every $\varepsilon>0$, the set $A(\varepsilon)=\{n \in \mathbb N: |p(x_{n},x)-p(x,x)| \geq \varepsilon\} \in \mathcal{I}$ i.e., if $\mathcal{I}-lim_{n\to\infty}p(x_{n},x)=p(x,x)$.
\end{definition}

\begin{definition} \cite{SUK2}
Let $(X,p)$ be a partial metric space. A sequence $\{ x_{n} \}$ in $X$ is said to be rough convergent (or $r$-convergent) to $x$ of roughness degree $r$ for some non-negative real number $r$ if for every  $\epsilon > 0$ there exists a natural number $k$ such that $| p(x_{n}, x)-p(x,x) |<r + \epsilon $ holds for all $n \geq k$.
\end{definition}

\section{\bf{Rough ideal convergence in partial metric spaces}}

\begin{definition}
Let $(X,p)$ be a partial metric space and $\{x_n\}$ be a sequence in $X$. Then the sequence $\{x_n\}$ is said to be rough ideal convergent of roughness degree $r$ (or in short rough $\mathcal{I}$-convergent or $r-\mathcal{I}$ convergent) to $x$, for any $\varepsilon>0$, the set 
\begin{center}
    $A(\varepsilon)=\{n \in \mathbb N : |p(x_n,x)-p(x,x) | \geq r+\varepsilon\} \in \mathcal{I}$.
\end{center}
\end{definition}

This can be denoted by  $x_{n} \stackrel{r-\mathcal{I}}{\longrightarrow} x$ in $(X,p)$, where we call $r$ as roughness degree. If we take $r=0$, we obtained $\mathcal{I}$-convergence in the partial metric space $(X,p)$. If a sequence $\{x_n\}$ is rough $\mathcal{I}$-convergent to $x$, then $x$ is said to be a rough $\mathcal{I}$-limit point of $\{x_n\}$. The set of all rough $\mathcal{I}$-limit points of a sequence $\{x_n\}$ is said to be the rough $\mathcal{I}$-limit set which is denoted by $\mathcal{I}-LIM^{r}x_{n}= \left\{x \in X : x_{n} \stackrel{r-\mathcal{I}}{\longrightarrow} x \right\}$. So, for roughness degree $r>0$, the rough $\mathcal{I}$-limit of a sequence may not be unique, as $\mathcal{I}-LIM^{r}x_{n}$ may contain more than one point. \\

\begin{theorem}
    Every rough convergent sequence in a partial metric space $(X,p)$ is rough $\mathcal{I}$- convergent in $(X,p)$.
\end{theorem}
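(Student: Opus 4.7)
The plan is to unwind the two definitions side by side and observe that the exceptional set in the rough $\mathcal{I}$-convergence formulation is actually forced to be finite when the sequence is already rough convergent. Since the ideals under consideration are admissible (as stipulated in Definitions 2.6 and 2.8, which the present section clearly builds on), every finite subset of $\mathbb{N}$ lies in $\mathcal{I}$, and the conclusion follows immediately.

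More concretely, I would start by fixing a sequence $\{x_n\}$ in $(X,p)$ that is rough convergent to some $x \in X$ with roughness degree $r \geq 0$, and fixing an arbitrary $\varepsilon > 0$. Applying Definition 2.9 to this $\varepsilon$, I obtain a natural number $k$ such that $|p(x_n,x) - p(x,x)| < r + \varepsilon$ for every $n \geq k$. The key observation is that the set
\[
A(\varepsilon) = \{ n \in \mathbb{N} : |p(x_n,x) - p(x,x)| \geq r + \varepsilon \}
\]
is therefore contained in the finite set $\{1, 2, \ldots, k-1\}$.

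Next, since $\mathcal{I}$ is assumed to be a (non-trivial) admissible ideal, every singleton $\{m\} \in \mathcal{I}$, and repeated application of the union axiom $(ii)$ from Definition 2.5 shows that every finite subset of $\mathbb{N}$ belongs to $\mathcal{I}$. By the hereditary axiom $(iii)$, $A(\varepsilon) \in \mathcal{I}$ as well. As $\varepsilon > 0$ was arbitrary, this is exactly the definition of $\{x_n\}$ being rough $\mathcal{I}$-convergent to $x$ with roughness degree $r$, i.e.\ $x_n \stackrel{r-\mathcal{I}}{\longrightarrow} x$.

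There is no real obstacle here; the whole content of the theorem is the admissibility of $\mathcal{I}$, and the only subtlety worth flagging in the write-up is to make explicit that the ideal is assumed admissible (this is implicit in the section, but the statement as written in Definition 3.1 does not repeat it). I would include a one-line remark to that effect so the finiteness-implies-membership step is unambiguous.
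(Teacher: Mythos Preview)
Your proposal is correct and follows essentially the same argument as the paper: both show that rough convergence forces the exceptional set $A(\varepsilon)$ to be contained in a finite initial segment of $\mathbb{N}$, and then invoke admissibility of $\mathcal{I}$ to conclude $A(\varepsilon)\in\mathcal{I}$. Your extra remarks spelling out why finite sets lie in an admissible ideal and flagging the implicit admissibility hypothesis are helpful elaborations, but do not alter the strategy.
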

\begin{proof}
Let $\{\xi_n\}$ be a rough convergent sequence in $(X,p)$ and let rough convergent to $\xi$.
Then for any $\varepsilon>0$, there exists $m \in \mathbb N$ such that for all $n \geq m$, we have $ |p(\xi_n,\xi)-p(\xi,\xi)| < r+\varepsilon$.
Now, the set $K(\varepsilon)=\{n \in \mathbb N : |p(\xi_n,\xi)-p(\xi,\xi) | \geq r+\varepsilon\} \subset \{1,2,3,...(m-1) \}$.
Since $\mathcal{I}$ is an admissible ideal, $K(\varepsilon) \in \mathcal{I}$.
Hence $\{\xi_n\}$ is rough $\mathcal{I}$ convergent in $(X,p)$.
    
\end{proof}

\begin{remark}
    The converse of the above theorem may not be true. We explain this fact by using the following example.
\end{remark}

\begin{example}
  Let $X=\mathbb R^+$ and $a>1$ ba a fixed real number and let $p: X\times X \longrightarrow \mathbb R^+$ be defined by $p(x,y)=a^{max \{x,y \}}$ for all $x,y \in X$. Then $(X,p)$ is a partial metric space. Let us take $\mathcal{I}=\{A \subset \mathbb N: \delta(A)=0 \}$.
  Let us consider a sequence $\{\xi_{n}\}$ as follows: 
  \begin{equation*}
    \  \xi_{n}= \begin{cases}
          k, & \text {if $n=k^2$}, \\
          0, & \text {otherwise}.
                 \end{cases}
  \end{equation*}
Let $\varepsilon>0$ be given.
Then $A_0=\{n \in \mathbb N : |p(\xi_n,0)-p(0,0) | \geq 1+\varepsilon\} \subset \{1^2,2^2,3^2,...\}=Q$ (say).
Since $Q \in \mathcal{I}$, $A_0 \in \mathcal{I}$. 
Similarly, the set
$A_k=\{n \in \mathbb N : |p(\xi_n,k)-p(k,k) | \geq 1+\varepsilon\} \subset Q$.
 Since $Q \in \mathcal{I}$, $A_k \in \mathcal{I}$.
 Hence $\{\xi_{n}\}$ is rough $\mathcal{I}$-convergent to 0 and $k$ of roughness degree 1.\\
 But \begin{equation*}
     |p(\xi_n,0)-p(0,0)|= \begin{cases}
         |p(k,0)-p(0,0)|=|a^k-1| & \text{if $n=k^2$},\\
         |p(0,0)-p(0,0)|=|1-1|=0 & \text{otherwise}. 
     \end{cases}
 \end{equation*}
 When $n=k^2$, there does not exist any positive integer $N$ such that the condition $|p(\xi_n,0)-p(0,0)|< r+ \varepsilon$ for all $n \geq N$ holds, since $|a^k-1| \longrightarrow \infty$ as $k^2 \longrightarrow \infty$.\\
 Hence $\{\xi_{n}\}$ is not rough convergent to 0 of any roughness degree $r>0$.
\end{example}

\begin{definition} \cite{SUK2}
    The diameter of a set $B$ in a partial metric space $(X,p)$ is defined by 
    \begin{center}
    $diam(B)$ = $sup$ $\{ p(x,y) : x, y\in B\}$.
\end{center}
\end{definition}

\begin{theorem}
Let $(X,p)$ be a partial metric space and $p(x,x)=a$ for all $x$ in $X$, where $a$ is a fixed positive real number. Then for a sequence $\{x_{n}\}$, we have $diam(\mathcal{I}-LIM^rx_n) \leq (2r+2a)$. 
\end{theorem}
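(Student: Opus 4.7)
The plan is to pick two arbitrary points $y,z \in \mathcal{I}-LIM^{r}x_{n}$ and bound $p(y,z)$ from above via the partial-metric triangle inequality $(p4)$. Since $p(x_n,x_n)=a$ for every $n$, that inequality becomes
$p(y,z)\leq p(y,x_n)+p(x_n,z)-a$,
so what I really need is a single index $n$ at which both $p(y,x_n)$ and $p(z,x_n)$ are simultaneously close to $a$.

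Given any $\varepsilon>0$, the definition of rough $\mathcal{I}$-convergence yields $A_y=\{n \in \mathbb{N}: |p(x_n,y)-a|\geq r+\varepsilon\}\in \mathcal{I}$, and similarly $A_z\in\mathcal{I}$. Because $\mathcal{I}$ is closed under finite unions and is non-trivial, $A_y\cup A_z\in\mathcal{I}$ and so $\mathbb{N}\setminus(A_y\cup A_z)$ is nonempty; fix any $n$ in this complement. Property $(p1)$ gives $p(x_n,y)\geq p(y,y)=a$, so the bound $|p(x_n,y)-a|<r+\varepsilon$ collapses to $p(x_n,y)<a+r+\varepsilon$, and likewise $p(x_n,z)<a+r+\varepsilon$. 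This pair of estimates is the technical heart of the argument, and getting them is the only mildly delicate step.

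Plugging into $(p4)$ gives $p(y,z)<2(a+r+\varepsilon)-a=a+2r+2\varepsilon$. Since $\varepsilon$ was arbitrary, $p(y,z)\leq a+2r\leq 2a+2r$. For the diagonal pairs $y=z$ one has $p(y,y)=a\leq 2a+2r$ by hypothesis. Taking the supremum over $y,z \in \mathcal{I}-LIM^{r}x_{n}$ then yields the claimed inequality $diam(\mathcal{I}-LIM^{r}x_{n})\leq 2a+2r$, with the statement being vacuous when the rough $\mathcal{I}$-limit set is empty. I do not expect serious obstacles: the only nontrivial move is the ideal/filter argument that extracts one common index $n$ witnessing both approximate bounds, and the rest is a direct application of $(p4)$ combined with the normalization $p(x,x)=a$.
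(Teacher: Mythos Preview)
Your proof is correct and follows essentially the same route as the paper: pick two rough $\mathcal{I}$-limits, use the ideal axioms to extract a common index $n$ at which both approximate bounds hold, and apply $(p4)$ with $p(x_n,x_n)=a$ to bound $p(y,z)$. The only cosmetic difference is that the paper argues by contradiction from the assumption $p(y,z)>2r+2a$ with a specially chosen $\varepsilon$, whereas you argue directly and let $\varepsilon\to 0$; in fact your direct computation yields the sharper estimate $p(y,z)\leq a+2r$, which you then relax to the stated bound $2a+2r$.
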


\begin{proof}
Let us assume that $diam(\mathcal{I}-LIM^{r} x_{n}) > 2r+2a$.
Then $\exists$ $y,z \in \mathcal{I}-LIM^{r} x_{n}$ such that $p(y,z)>2r+2a$.
Let $\varepsilon>0$ and choose $\varepsilon \in (0,\frac{p(y,z)}{2}-r-a)$. 
Since $y,z \in \mathcal{I}-LIM^{r} x_{n}$, then we have 
\begin{equation*}
   A(\varepsilon)= \{ n \in \mathbb N: |p(x_{n},y)-p(y,y)| \geq r+\varepsilon \} \in \mathcal{I} 
\end{equation*}
and
\begin{equation*}
 B(\varepsilon)= \{ n \in \mathbb N:|p(x_{n},z)-p(z,z)|\geq r+\varepsilon \} \in \mathcal{I}.   
\end{equation*}
Now, $C(\varepsilon)=\mathbb N \setminus ( A(\varepsilon) \cup B(\varepsilon) ) \in \mathcal{F(I)}$ and so $C(\varepsilon) \neq \phi$.
Let $n \in C(\varepsilon)$. Then we can write
 \begin{equation*}
       \begin{split}
p(y,z) &\leq p(y,x_{n})+p(x_{n},z)-p(x_{n},x_{n})\\
       &=\{p(x_{n},y)-p(y,y)\}+\{p(x_{n},z)-p(z,z)\}-p(x_{n},x_{n})+p(y,y)+p(z,z)\\
       &< 2(r+\varepsilon)-a+a+a\\
       &=2r+2\varepsilon+a\\
       &<2r+ p(y,z)-2r-2a+a\\
       &=p(y,z)-a , \ \text{which is a contradiction}.    
       \end{split}
   \end{equation*}    
Hence $diam(\mathcal{I}-LIM^{r} x_{n}) \leq 2r+2a$. 
\end{proof}

\begin{theorem}
    Let $\{x_{n}\}$ be a sequence $\mathcal{I}$-convergent to $x$ in a partial metric space $(X,p)$. Then $\{ y \in \overline{B^{p}_{r}}(x):p(x,x)=p(y,y)\} \subseteq \mathcal{I}-LIM^{r}x_{n}$.
\end{theorem}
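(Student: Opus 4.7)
The plan is to show containment by fixing an arbitrary $y$ in the left-hand side and verifying the rough $\mathcal{I}$-convergence condition at $y$ via reduction to the hypothesis that $\{x_n\}$ is $\mathcal{I}$-convergent to $x$. So I would fix $y \in X$ satisfying $p(x,y) \le p(x,x) + r$ and $p(y,y) = p(x,x)$, and for arbitrary $\varepsilon > 0$ try to exhibit the set $\{n \in \mathbb{N} : |p(x_n,y)-p(y,y)| \ge r+\varepsilon\}$ as a subset of something already known to lie in $\mathcal{I}$.

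The natural candidate is the ideal-convergence set $D(\varepsilon) = \{n \in \mathbb{N} : |p(x_n,x)-p(x,x)| \ge \varepsilon\}$, which belongs to $\mathcal{I}$ by hypothesis. To obtain the inclusion, I would first observe that by property $(p1)$ we have $p(y,y) \le p(x_n,y)$ and $p(x,x) \le p(x_n,x)$, so the two differences $p(x_n,y)-p(y,y)$ and $p(x_n,x)-p(x,x)$ are nonnegative and the absolute value signs can be removed. Then I would apply the triangle inequality $(p4)$ in the form
\begin{equation*}
p(x_n,y) \le p(x_n,x) + p(x,y) - p(x,x),
\end{equation*}
subtract $p(y,y)$ from both sides, and use $p(y,y)=p(x,x)$ together with $p(x,y)-p(x,x) \le r$ to estimate
\begin{equation*}
p(x_n,y) - p(y,y) \le \bigl(p(x_n,x)-p(x,x)\bigr) + r.
\end{equation*}
Consequently, whenever $n \notin D(\varepsilon)$, i.e.\ $p(x_n,x)-p(x,x) < \varepsilon$, the estimate forces $p(x_n,y)-p(y,y) < r+\varepsilon$. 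This yields the inclusion $\{n : |p(x_n,y)-p(y,y)| \ge r+\varepsilon\} \subseteq D(\varepsilon)$, and since $\mathcal{I}$ is closed under taking subsets, the former set is in $\mathcal{I}$, giving $y \in \mathcal{I}\text{-}LIM^r x_n$.

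I do not anticipate a serious obstacle: the argument is a direct triangle-inequality manipulation. The only subtle point is remembering that in a partial metric space the quantity $p(x,y)-p(x,x)$ plays the role of ``distance from $x$ to $y$'' (which is why the ball radius condition is written as $p(x,y) \le p(x,x)+r$), and that the hypothesis $p(y,y)=p(x,x)$ is precisely what converts the $p(x,y)-p(x,x)$ term produced by $(p4)$ into the bound $\le r$ needed after subtracting $p(y,y)$. Without this self-distance condition the reduction would leave a leftover term $p(x,x)-p(y,y)$, which is exactly why the statement singles out those $y$ in the closed ball with matching self-distance.
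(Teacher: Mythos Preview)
Your proposal is correct and follows essentially the same route as the paper: fix $y$ in the closed ball with $p(y,y)=p(x,x)$, apply the triangle inequality $(p4)$ to bound $p(x_n,y)-p(y,y)$ by $(p(x_n,x)-p(x,x))+r$, and conclude via the hereditary property of $\mathcal{I}$ that the rough $\mathcal{I}$-convergence set at $y$ lies in $\mathcal{I}$. The only cosmetic differences are that the paper phrases the containment through the associated filter $\mathcal{F}(\mathcal{I})$ rather than directly through the ideal, and that you make explicit (via $(p1)$) why the absolute values may be dropped, a point the paper handles more tersely.
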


\begin{proof}
 Let $\varepsilon >0$. 
Since the sequence $\{x_{n}\}$ $\mathcal{I}$-converges to $x$, the set $A(\varepsilon)=\{n \in \mathbb N: |p(x_{n},x)-p(x,x)| \geq \varepsilon \} \in \mathcal{I}$.
 Let $y \in \overline{B^{p}_{r}}(x)$ such that $p(x,x)=p(y,y)$. 
 Then $p(x,y) \leq p(x,x)+r$ and $p(x,x)=p(y,y)$.
 Let $M=\mathbb N \setminus A(\varepsilon) \in \mathcal{F(I)}$. 
 So $M \neq \phi$. If $n \in M$, then
 \begin{equation*}
    \begin{split}
        p(x_{n},y) &\leq p(x_{n},x)+p(x,y)-p(x,x)\\
                   & = \{p(x_{n},x)-p(x,x)\}+p(x,y)\\
                   & < \epsilon + \{p(x,x)+r\} \\
                   &= p(x,x)+(r+\epsilon) 
    \end{split}
\end{equation*}
 Therefore, 
\begin{equation*}
    \begin{split}
p(x_{n},y)-p(y,y) & <p(x,x)-p(y,y)+(r+\epsilon)\\
                  & =(r+\varepsilon), \ \ \text{since} \ p(x,x)= p(y,y).
    \end{split}
\end{equation*}
This implies that $ |p(x_{n},y)-p(y,y)| < p(x_{n},y)-p(y,y) < (r+\varepsilon) $.
So, $M \subset \{ n \in \mathbb N: |p(x_{n},y)-p(y,y)| < r+\varepsilon \}$.
Consequently, $\{ n \in \mathbb N: |p(x_{n},y)-p(y,y)| < r+\varepsilon \} \in \mathcal{F(I)}$. 
So, $\{ n \in \mathbb N: |p(x_{n},y)-p(y,y)| \geq r+\varepsilon \} \in \mathcal{I}$.
Therefore, $y \in \mathcal{I}-LIM^{r}x_{n}$.
Hence $\{ y \in \overline{B^{p}_{r}}(x):p(x,x)=p(y,y)\} \subseteq \mathcal{I}-LIM^{r}x_{n}$.
\end{proof}

We recall that from \cite{SUK2}, a partial metric space is first countable. The next result is a consequence of this fact.

\begin{theorem}
   Let $\{x_{n}\}$ be a $r$-$\mathcal{I}$ convergent sequence in a partial metric space $(X,p)$. Then $\mathcal{I}-LIM^{r}x_{n}$ is a closed set for any degree of roughness $r \geq 0$.
\end{theorem}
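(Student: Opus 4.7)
The plan is to exploit first countability to reduce closedness to sequential closedness: I would take any limit point $y$ of $\mathcal{I}-LIM^{r}x_n$ and, using first countability of $(X,p)$ as recalled just before the statement, pick a sequence $\{y_k\} \subseteq \mathcal{I}-LIM^{r}x_n$ with $y_k \to y$ in $(X,p)$, i.e.\ $\lim_{k \to \infty} p(y_k,y) = p(y,y)$. The goal is then to show $y \in \mathcal{I}-LIM^{r}x_n$.

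Fix $\varepsilon > 0$. The strategy is to split the error into two halves, one controlled by the partial-metric convergence $y_k \to y$ and the other by the $r$-$\mathcal{I}$-convergence to $y_k$. First I would choose $k$ large enough that $|p(y_k,y) - p(y,y)| < \varepsilon/2$; keep this $k$ fixed. Since $y_k \in \mathcal{I}-LIM^{r}x_n$, the set
\begin{equation*}
D(\varepsilon) = \{ n \in \mathbb{N} : |p(x_n,y_k) - p(y_k,y_k)| \geq r + \varepsilon/2 \}
\end{equation*}
belongs to $\mathcal{I}$. For $n \notin D(\varepsilon)$, the triangularity axiom $(p4)$ gives
\begin{equation*}
p(x_n,y) \leq p(x_n,y_k) + p(y_k,y) - p(y_k,y_k),
\end{equation*}
so $p(x_n,y) - p(y,y) \leq [p(x_n,y_k) - p(y_k,y_k)] + [p(y_k,y) - p(y,y)] < (r+\varepsilon/2) + \varepsilon/2 = r + \varepsilon$. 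Here I used the fact that, by $(p1)$, both $p(x_n,y_k) - p(y_k,y_k) \geq 0$ and $p(x_n,y) - p(y,y) \geq 0$, so the differences equal their absolute values. Therefore
\begin{equation*}
\{ n \in \mathbb{N} : |p(x_n,y) - p(y,y)| \geq r+\varepsilon \} \subseteq D(\varepsilon) \in \mathcal{I},
\end{equation*}
whence this set lies in $\mathcal{I}$, proving $y \in \mathcal{I}-LIM^{r}x_n$.

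The main conceptual obstacle is the asymmetry of the partial-metric triangle inequality — the $-p(z,z)$ term in $(p4)$ and the nonzero self-distances $p(y_k,y_k)$, $p(y,y)$ all threaten to leave residual error. The saving observation is that $(p1)$ forces $p(x_n,w) - p(w,w) \geq 0$, so the quantities involved are already nonnegative and the $-p(y_k,y_k)$ in the triangle inequality cancels cleanly against the $p(y_k,y_k)$ built into the $r$-$\mathcal{I}$-convergence hypothesis. The rest is a routine $\varepsilon/2$-splitting argument.
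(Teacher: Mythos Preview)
Your proof is correct and follows essentially the same approach as the paper's: take a sequence $\{y_k\}$ in $\mathcal{I}-LIM^r x_n$ converging to $y$, fix one $y_k$ with $|p(y_k,y)-p(y,y)|<\varepsilon/2$, use $r$-$\mathcal{I}$-convergence to $y_k$ for the other $\varepsilon/2$, and combine via the partial-metric triangle inequality $(p4)$. Your explicit invocation of $(p1)$ to justify that the relevant differences are nonnegative (hence equal to their absolute values) is a small clarification the paper leaves implicit, but the argument is otherwise identical.
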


\begin{proof}
Let $\varepsilon>0$ be given and
let $ y \in \mathcal{I}-LIM^{r}x_{n}$.
So, let $\{y_{n}\}$ be a sequence in $\mathcal{I}-LIM^{r}x_{n}$ such that $\{y_{n}\} {\longrightarrow} y$.
Since $\{y_{n}\} {\longrightarrow} y$, there exists $k_\frac{\varepsilon}{2} \in \mathbb N$ such that $|p(y_n,y)-p(y,y)| < \frac{\varepsilon}{2}$ for all $n \geq k_\frac{\varepsilon}{2}$.
We can choose $k_0 \in \mathbb N$ such that $k_0 > k_\frac{\varepsilon}{2}$.
Then $|p(y_{k_0},y)-p(y,y)| < \frac{\varepsilon}{2}$.
Again, since $\{y_{n}\} \subset \mathcal{I}-LIM^{r}x_{n}$, then $y_{k_0} \in \mathcal{I}-LIM^{r}x_{n}$ and so the set $A=\{n \in \mathbb N: |p(x_n,y_{k_0})-p(y_{k_0},y_{k_0})| \geq r+ \frac{\varepsilon}{2}\} \in \mathcal{I}$.
Now, we show that $A^c \subset \{n \in \mathbb N:|p(x_n,y)-p(y,y)|<r+\varepsilon \}.$
Let $m \in A^c$. 
Then $|p(x_m,y_{k_0})-p(y_{k_0},y_{k_0})| < r+ \frac{\varepsilon}{2}$.
Therefore,
\begin{equation*}
    \begin{split}
p(x_{m},y)-p(y,y) & \leq p(x_{m},y_{k_0})+p(y_{k_0},y)-p(y_{k_0},y_{k_0})-p(y,y) \\
         & \leq | p(x_{m},y_{k_0})-p(y_{k_0},y_{k_0})+p(y_{k_0},y)-p(y,y)| \\
         & \leq |p(x_{m},y_{k_0})-p(y_{k_0},y_{k_0})| + |p(y_{k_0},y)-p(y,y)| \\
         & < (r+\frac{\varepsilon}{2}) + \frac{\varepsilon}{2} \\
         & = r+\varepsilon
    \end{split}
\end{equation*}
Hence $m \in \{ n \in \mathbb N: |p(x_{n},y)-p(y,y)|<r+\varepsilon \}$.
Since $A^c \in \mathcal{F(I)}$, so $\{ n \in \mathbb N: |p(x_{n},y)-p(y,y)|<r+\varepsilon \} \in \mathcal{F(I)}$.
Hence $\{ n \in \mathbb N: |p(x_{n},y)-p(y,y)| \geq r+\varepsilon \} \in \mathcal{I}$.
Therefore, $y \in \mathcal{I}-LIM^{r}x_{n}$. 
Hence the theorem is proved.
\end{proof} 

\begin{definition} (cf \cite{AKB})
   A sequence $\{x_{n}\}$ in a partial metric space $(X,p)$ is said to be $\mathcal{I}$-bounded if for any fixed $u \in X$ there exists a positive real number $M$ such that 
   \begin{center}
       $\{ n \in \mathbb N : p(x_n,u) \geq M \} \in \mathcal{I}$.
   \end{center} 
\end{definition}

\begin{theorem}
Let $(X,p)$ be a partial metric space and $p(x,x)=a$, $ \forall x \in X$, where $a$ is a fixed positive real number. Then a sequence $\{x_{n}\}$ in $(X,p)$ is $\mathcal{I}$-bounded if and only if there exists a non-negative real number $r$ such that $\mathcal{I}-LIM^{r}x_{n} \neq \phi$.     
\end{theorem}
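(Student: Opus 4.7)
The plan is to exhibit a rough $\mathcal{I}$-limit point explicitly in one direction and to use the triangle inequality $(p4)$ to transfer a bound in the other. The key structural observation is that axiom $(p1)$ together with the hypothesis $p(x,x) = a$ forces $p(x_n, u) \geq p(u,u) = a$ for every $n$ and every $u \in X$, so $|p(x_n, u) - p(u,u)| = p(x_n, u) - a$ throughout and the absolute values in both definitions simplify.

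For the forward direction, I would suppose $\{x_n\}$ is $\mathcal{I}$-bounded, fix some $u \in X$, and pick a witness $M > 0$ with $\{n \in \mathbb{N} : p(x_n, u) \geq M\} \in \mathcal{I}$. Since $p(x_n, u) \geq a$ for all $n$, the case $M \leq a$ would make this set equal to $\mathbb{N}$, contradicting non-triviality of $\mathcal{I}$; hence $M > a$. Setting $r := M - a \geq 0$, for each $\varepsilon > 0$ one has
\begin{equation*}
\{n : |p(x_n, u) - p(u,u)| \geq r + \varepsilon\} = \{n : p(x_n, u) \geq M + \varepsilon\} \subseteq \{n : p(x_n, u) \geq M\} \in \mathcal{I},
\end{equation*}
so $u \in \mathcal{I}-LIM^{r}x_{n}$, and in particular this set is nonempty.

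For the converse, I would assume $\mathcal{I}-LIM^{r}x_{n} \neq \phi$ for some $r \geq 0$ and pick any $x$ in it. Applying the definition of rough $\mathcal{I}$-convergence with $\varepsilon = 1$ and using the simplification above yields $A := \{n \in \mathbb{N} : p(x_n, x) \geq a + r + 1\} \in \mathcal{I}$. For an arbitrary prescribed $u \in X$, the partial-metric triangle inequality $(p4)$ gives $p(x_n, u) \leq p(x_n, x) + p(x, u) - a$, so $\{n : p(x_n, u) \geq r + 1 + p(x, u)\} \subseteq A$ and therefore lies in $\mathcal{I}$. Taking $M_u := r + 1 + p(x, u) > 0$ witnesses $\mathcal{I}$-boundedness at the arbitrarily chosen $u$.

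The only subtle point I anticipate is forcing $M > a$ in the forward direction, which is needed to guarantee that $r = M - a$ is a legitimate non-negative roughness degree; this is the unique place where non-triviality of $\mathcal{I}$ is genuinely used. Everything else reduces to direct applications of $(p1)$, $(p4)$, and closure of an ideal under subsets.
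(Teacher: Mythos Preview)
Your proof is correct and follows the same overall strategy as the paper's, but the bookkeeping differs in two places worth noting. In the forward direction the paper simply sets $r = M + a$ rather than $r = M - a$; this cruder choice of roughness degree sidesteps your non-triviality argument for $M > a$ entirely, so that step is not actually needed. In the converse the paper verifies $\mathcal{I}$-boundedness only at the rough limit point itself (taking $M = r + a + \varepsilon$), whereas you invoke $(p4)$ to transfer the bound to an arbitrarily prescribed anchor $u$; your version is a bit stronger and makes explicit that the choice of anchor is immaterial. Your systematic use of $(p1)$ to rewrite $|p(x_n,u) - p(u,u)|$ as $p(x_n,u) - a$ also yields cleaner inequalities than the paper's, which falls back on $|p(x_k,u) - p(u,u)| \leq |p(x_k,u)| + |p(u,u)|$.
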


\begin{proof}
Let $u \in X$ be a fixed element in $X$.
Since the sequence $\{x_{n}\}$ is $\mathcal{I}$-bounded, there exists a positive real number $M$ such that the set $A=\{ n \in \mathbb N : p(x_n,u) \geq M\} \in \mathcal{I}$.
Let $\varepsilon>0$ and let $r=M+a$.
Clearly, $A^c \subset \{ n \in \mathbb N:|p(x_n,u)-p(u,u) < r+\varepsilon\}$.
For, let $k \in A^c$.
Then $p(x_k,u) < M$.
So, 
$p(x_k,u)-p(u,u) \leq |p(x_k,u)-p(u,u)| \leq |p(x_k,u)|+|p(u,u)| < M+a=r <r+\varepsilon $, which implies that
 $k \in \{ n \in \mathbb N:|p(x_n,u)-p(u,u)| < r+\varepsilon \}$. 
Since $A \in \mathcal{I}$, $\{ n \in \mathbb N : |p(x_n,u)-p(u,u) \geq r+\varepsilon \} \in \mathcal{I}$.
Therefore, $ u \in \mathcal{I}-LIM^{r}x_{n} $ and so
$\mathcal{I}-LIM^{r}x_{n} \neq \phi$. \\

Conversely, suppose that $\mathcal{I}-LIM^{r}x_{n} \neq \phi$.
So, let $u \in \mathcal{I}-LIM^{r}x_{n}$ . 
Then for $\varepsilon >0$, the set
$A=\{ n \in \mathbb N : |p(x_n,u)-p(u,u) \geq r+\varepsilon \} \in \mathcal{I}$.
Let $M=r+a+\varepsilon$. Then if $ k \in A^c$, we can write 
\begin{equation*}
    \begin{split}
     p(x_k,u) & = |p(x_k,u)-p(u,u)+p(u,u)|  \\
              & \leq |p(x_k,u)-p(u,u)|+|p(u,u)| \\
              & < r+a+\varepsilon = M
               \end{split}
\end{equation*}
Hence $ k \in \{ n \in \mathbb N: p(x_n,u) < M \}$.
So, $ A^c \subset \{ n \in \mathbb N: p(x_n,u) < M \}$. 
Since $A \in \mathcal{I}$, so $\{ n \in \mathbb N : p(x_n,u) \geq M\} \in \mathcal{I}$. 
Hence $\{x_{n}\}$ is $\mathcal{I}$-bounded. 
\end{proof}

\begin{theorem}
    Let $\{ x_{{n}_{k}}\}$ be a subsequence of $\{x_{n}\}$ such that $\{ n_1, n_2,... \} \in \mathcal{F(I)}$, then $\mathcal{I}-LIM^{r}x_{n} \subseteq \mathcal{I}-LIM^{r}x_{n_{k}}$. 
\end{theorem}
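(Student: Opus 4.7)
The plan is to take an arbitrary point $y \in \mathcal{I}\text{-}LIM^{r}x_{n}$ and check directly that $y \in \mathcal{I}\text{-}LIM^{r}x_{n_{k}}$. The key observation is that the set of ``bad'' indices for the subsequence is obtained by intersecting the set of bad indices for the full sequence with the index set $M=\{n_{1},n_{2},\ldots\}$ of the subsequence, so the inclusion should follow immediately from the hereditary property (axiom (iii)) of an ideal.

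Concretely, I would begin by fixing an arbitrary $\varepsilon>0$. Since $y\in\mathcal{I}\text{-}LIM^{r}x_{n}$, the set
\[
A(\varepsilon)=\bigl\{n\in\mathbb{N}:|p(x_{n},y)-p(y,y)|\geq r+\varepsilon\bigr\}
\]
belongs to $\mathcal{I}$. Next, I would write down the analogous set for the subsequence: viewing $\{x_{n_{k}}\}$ as indexed by the subset $M\in\mathcal{F}(\mathcal{I})$ of $\mathbb{N}$, the defining set for rough $\mathcal{I}$-convergence of $\{x_{n_{k}}\}$ to $y$ at threshold $r+\varepsilon$ is
\[
B(\varepsilon)=\bigl\{n_{k}:|p(x_{n_{k}},y)-p(y,y)|\geq r+\varepsilon\bigr\}=A(\varepsilon)\cap M.
\]
Since $B(\varepsilon)\subseteq A(\varepsilon)\in\mathcal{I}$, the hereditary axiom gives $B(\varepsilon)\in\mathcal{I}$. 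As $\varepsilon>0$ was arbitrary, this yields $y\in\mathcal{I}\text{-}LIM^{r}x_{n_{k}}$, and since $y$ was an arbitrary element of $\mathcal{I}\text{-}LIM^{r}x_{n}$, we obtain the desired inclusion.

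There is really no substantial obstacle here: the argument is a one-line application of the closure of $\mathcal{I}$ under subsets, together with the observation that restricting to the subsequence only shrinks the set of bad indices. The role of the hypothesis $\{n_{1},n_{2},\ldots\}\in\mathcal{F}(\mathcal{I})$ is not to make the inclusion go through (it would hold even for $\mathcal{I}$-small $M$, trivially), but to ensure that the subsequence is $\mathcal{I}$-large so that $\mathcal{I}\text{-}LIM^{r}x_{n_{k}}$ is a meaningful, non-degenerate object; the only care needed in writing the proof is to be explicit about the interpretation of $\mathcal{I}\text{-}LIM^{r}$ applied to the subsequence, which is what the identity $B(\varepsilon)=A(\varepsilon)\cap M$ makes transparent.
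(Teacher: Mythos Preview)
Your argument is correct and follows essentially the same approach as the paper: both identify the ``bad'' index set for the subsequence as a subset of $\mathbb{N}$ contained in a member of $\mathcal{I}$, and conclude via the hereditary property. Your version is in fact slightly more direct than the paper's (which passes through complements and bounds the bad set by $A\cup M^{c}$, invoking $M^{c}\in\mathcal{I}$ as well), and your remark that the hypothesis $M\in\mathcal{F}(\mathcal{I})$ is not actually needed for the inclusion itself is a valid observation.
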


\begin{proof}
Let $\varepsilon>0$ be arbitrary.
Let $x \in \mathcal{I}-LIM^{r}x_{n}$.
Then the set $A=\{ n \in \mathbb N : |p(x_n,x)-p(x,x)| \geq r+\varepsilon \} \in \mathcal{I}$.
Since the set $M=\{ n_1, n_2,... \} \in \mathcal{F(I)}$, so $A^c \cap M \neq \phi$. 
Let $ n_i \in A^c \cap M $. 
Then $ |p(x_{{n}_{i}},x)-p(x,x)| < r+\varepsilon $ 
i.e. $ n_i \in \{ n_k \in M: |p(x_{{n}_{k}},x)-p(x,x)| < r+\varepsilon \}=B$(say). 
So, $A^c \cap M \subset B$. This implies that $B^c \subset (A^c \cap M)^c=A \cup M^c$, i.e., $ \{ n_k \in M: |p(x_{{n}_{k}},x)-p(x,x)| \geq r+\varepsilon  \} \subset A \cup M^c$.
Since $(A \cup M^c) \in \mathcal{I}$, so we can conclude that 
 $ \{ n_k \in M: |p(x_{{n}_{k}},x)-p(x,x)| \geq r+\varepsilon \} \in \mathcal{I}$.  
Hence $ x \in \in \mathcal{I}-LIM^{r}x_{n_{k}}$. 
Therefore, $ \mathcal{I}-LIM^{r}x_{n} \subseteq \mathcal{I}-LIM^{r}x_{n_{k}}$.
\end{proof}

\begin{theorem}
Let  $\mathcal{I}$ be an admissible ideal and  $(X, p)$ be a partial metric space and let $\{a_{n}\}$ and $\{b_{n}\}$ be two sequences such that $p(a_{n}, b_{n}) \longrightarrow 0$ as $n \longrightarrow \infty$.
Then $\{a_{n}\}$ is rough-$\mathcal{I}$ convergent to $a$ and $p(a_{n},a_{n}) \longrightarrow 0$ as $n \longrightarrow \infty$ if and only if $\{b_{n}\}$ is rough-$\mathcal{I}$ convergent to $a$. 
\end{theorem}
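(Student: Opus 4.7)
The plan is to reduce the equivalence to a single two-sided estimate that transfers rough $\mathcal{I}$-convergence between the two sequences. Specifically, I would first establish the key inequality
\begin{equation*}
\bigl|p(b_n, a) - p(a, a)\bigr| \;\le\; p(a_n, b_n) \;+\; \bigl|p(a_n, a) - p(a, a)\bigr|
\end{equation*}
valid for every $n \in \mathbb N$. Two applications of $(p4)$ yield this: the bound $p(b_n, a) \le p(b_n, a_n) + p(a_n, a) - p(a_n, a_n)$ (combined with $(p3)$) upper-bounds $p(b_n, a) - p(a,a)$, while $p(a_n, a) \le p(a_n, b_n) + p(b_n, a) - p(b_n, b_n)$ supplies the matching lower bound. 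Since $p(a_n, a_n), p(b_n, b_n) \ge 0$ by $(p1)$, both sides loosen to the displayed estimate after a case-check on the sign of $p(b_n,a)-p(a,a)$.

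For the forward direction, fix $\varepsilon > 0$. Since $p(a_n, b_n) \to 0$ in the ordinary sense and $\mathcal{I}$ is admissible, the set $\{n \in \mathbb N : p(a_n, b_n) \ge \varepsilon/2\}$ is finite and hence lies in $\mathcal{I}$, while rough $\mathcal{I}$-convergence of $\{a_n\}$ to $a$ gives $\{n \in \mathbb N : |p(a_n, a) - p(a, a)| \ge r + \varepsilon/2\} \in \mathcal{I}$. The key inequality sandwiches $\{n \in \mathbb N : |p(b_n, a) - p(a,a)| \ge r + \varepsilon\}$ inside the union of these two sets, so by the ideal axioms it also lies in $\mathcal{I}$, which is exactly rough $\mathcal{I}$-convergence of $\{b_n\}$ to $a$. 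For the reverse direction, swapping the roles of $a_n$ and $b_n$ in the derivation above (invoking $(p3)$) produces the symmetric bound $|p(a_n, a) - p(a, a)| \le p(a_n, b_n) + |p(b_n, a) - p(a,a)|$, from which the same splitting argument yields rough $\mathcal{I}$-convergence of $\{a_n\}$ to $a$; the auxiliary conclusion $p(a_n, a_n) \to 0$ is free from $(p1)$, since $0 \le p(a_n, a_n) \le p(a_n, b_n) \to 0$.

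The main obstacle, though modest, is the two-sided triangular estimate itself: one must carefully track signs after each application of $(p4)$ and use the nonnegativity of the self-distances to discard the awkward $-p(a_n, a_n)$ and $-p(b_n, b_n)$ terms. Once that estimate is in hand, the forward direction does not in fact require the separate hypothesis $p(a_n, a_n) \to 0$ (which is already a free consequence of $(p1)$), and both implications collapse to the standard finite-union argument for sets in an ideal.
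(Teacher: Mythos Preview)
Your argument is correct and follows the same skeleton as the paper's: apply the triangularity axiom $(p4)$ to bound $|p(b_n,a)-p(a,a)|$ in terms of $p(a_n,b_n)$ and $|p(a_n,a)-p(a,a)|$, then use the finite-union property of the admissible ideal. The paper splits $\varepsilon$ into thirds and explicitly invokes the hypothesis $p(a_n,a_n)\to 0$ to control the stray $|p(a_n,a_n)|$ term; you instead discard $-p(a_n,a_n)$ outright via nonnegativity and split $\varepsilon$ in half, which is tidier. Your remark that $p(a_n,a_n)\to 0$ is already forced by $(p1)$ and the standing assumption $p(a_n,b_n)\to 0$ is a genuine simplification the paper overlooks (it also supplies the missing half of the converse, which the paper dismisses as ``similar''). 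One small over-engineering on your side: by $(p1)$ we always have $p(b_n,a)\ge p(a,a)$, so $p(b_n,a)-p(a,a)$ is automatically nonnegative and your ``case-check on the sign'' and second application of $(p4)$ for a lower bound are unnecessary---a single upper bound suffices.
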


\begin{proof}
 First suppose that $\{a_{n}\}$ is rough-$\mathcal{I}$ convergent to $a$ and $p(a_{n},b_{n}) \longrightarrow 0$ as $n \longrightarrow \infty$.
 Let $\varepsilon>0$.  
 Then the set $A=\{n \in \mathbb N: |p(a_n,a-p(a,a)| \geq r+\frac{\varepsilon}{3}\} \in \mathcal{I}$.
 Since $p(a_{n},b_{n}) \longrightarrow 0$ as $n \longrightarrow \infty$, so for this $\varepsilon>0$, there exists  $m_1 \in \mathbb N$ such that 
 \begin{equation} \label{a}
     p(a_{n},b_{n})\leq \frac{\varepsilon}{3}, \ \text{whenever} \ n \geq m_1.
 \end{equation}
   
Again, since $p(a_{n},a_{n}) \longrightarrow 0$ as $n \longrightarrow \infty$, there exists $ m_2 \in \mathbb N$ such that 
\begin{equation} \label{b}
    p(a_{n},a_{n})\leq \frac{\varepsilon}{3}, \ \text{whenever} \ n \geq m_2.
\end{equation}
Let $ m= \ max \{ m_1, m_2 \}$. Then equation (3.1) and (3.2) both hold for $n \geq m$. \\
Since $A \in \mathcal{I}$, $A^c \in \mathcal{F(I)}$.
Again, since $\mathcal{I}$ is an admissible ideal, $\{1,2,...m\} \in \mathcal{I}$.
So, $\{1,2,...m\}^c \in \mathcal{F(I)}$
and hence $A^c \cap \{1,2,...m\}^c \neq \phi$.
Also, if $n \in A^c \cap \{1,2,...m\}^c$ then we have
 \begin{equation*}
    \begin{split}
\ p(b_{n},a) &  \leq p(b_{n},a_{n})+p(a_{n},a)-p(a_{n},a_{n}) \\
\text{Therefore},\ p(b_{n},a) - p(a,a) & \leq p(b_{n},a_{n}) + p(a_{n},a) - p(a_{n},a_{n}) - p(a,a) \\
\text{This implies that} \ |p(b_{n},a) - p(a,a)|
    & \leq |p(b_{n},a_{n}) + p(a_{n},a) - p(a_{n},a_{n}) - p(a,a)| \\
    & \leq |p(b_{n},a_{n})| + |p(a_{n},a) - p(a,a)| + |p(a_{n},a_{n})|\\
    & < \frac{\varepsilon}{3} + (r+ \frac{\varepsilon}{3})+\frac{\varepsilon}{3} \\
    &  = r + \varepsilon 
    \end{split}
\end{equation*}
Hence $A^c \cap \{1,2,...m\}^c \subset \{n \in \mathbb N : |p(b_{n},a) - p(a,a)| < r+\varepsilon \}$.
So, $\{n \in \mathbb N : |p(b_{n},a) - p(a,a)| \geq r+\varepsilon \} \subset (A^c \cap \{1,2,...m\}^c)^c = A \cup \{1,2,...m\} \in \mathcal{I}$.
Therefore, $\{b_{n}\}$ is rough-$\mathcal{I}$ convergent to $a$. \\
Converse part is similar.
\end{proof}

\begin{theorem}
Let $\mathcal{I}$ be an admissible ideal and let $\{a_{n}\}$ and $\{b_{n}\}$ be two sequences in a partial metric space $(X, p)$ such that $p(a_{n},b_{n}) \longrightarrow 0$ as $ n \longrightarrow \infty$. 
If $\{a_{n}\}$ is rough-$\mathcal{I}$ convergent to $a$ of roughness degree $r$ and if $c$ is a positive number such that $ p(a_{n},a_{n}) \leq c$ for all $n$, then $\{b_{n}\}$ is rough-$\mathcal{I}$ convergent to $a$ of roughness degree $r+c$. 
Conversely, if $\{b_{n}\}$ is rough-$\mathcal{I}$ convergent to $b$ of roughness degree $r$ and $d$ is a positive number such that $p(b_{n},b_{n})\leq d$ for all $n$, then $\{a_{n}\}$ is rough-$\mathcal{I}$ convergent to $b$ of roughness degree $r+d$.      
\end{theorem}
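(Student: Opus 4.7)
The plan is to imitate the proof of the previous theorem, but replace the vanishing hypothesis $p(a_n,a_n)\to 0$ by the uniform bound $p(a_n,a_n)\leq c$; this bound will resurface as the extra $c$ in the new roughness degree. The overall structure has three ingredients: an $\mathcal{I}$-small exceptional set coming from rough-$\mathcal{I}$ convergence of $\{a_n\}$, a finite initial segment coming from $p(a_n,b_n)\to 0$, and the partial-metric triangle inequality (p4).

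Fix $\varepsilon>0$. First, rough-$\mathcal{I}$ convergence of $\{a_n\}$ to $a$ of degree $r$ gives $A=\{n\in\mathbb{N}:|p(a_n,a)-p(a,a)|\geq r+\varepsilon/2\}\in\mathcal{I}$. Second, $p(a_n,b_n)\to 0$ supplies some $m\in\mathbb{N}$ with $p(a_n,b_n)<\varepsilon/2$ for all $n\geq m$. Admissibility of $\mathcal{I}$ makes $F=\{1,\ldots,m-1\}\in\mathcal{I}$, so the complement $(A\cup F)^c$ lies in $\mathcal{F(I)}$ and is in particular non-empty.

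For $n\in(A\cup F)^c$, I would apply (p4) to get $p(b_n,a)\leq p(b_n,a_n)+p(a_n,a)-p(a_n,a_n)$, subtract $p(a,a)$ from both sides, and use the ordinary triangle inequality for absolute value (noting that $p(b_n,a)-p(a,a)\geq 0$ by (p1), so the left-hand side is exactly $|p(b_n,a)-p(a,a)|$) to obtain $|p(b_n,a)-p(a,a)|\leq p(b_n,a_n)+|p(a_n,a)-p(a,a)|+p(a_n,a_n)<\tfrac{\varepsilon}{2}+(r+\tfrac{\varepsilon}{2})+c=r+c+\varepsilon$. Hence $\{n\in\mathbb{N}:|p(b_n,a)-p(a,a)|\geq r+c+\varepsilon\}\subseteq A\cup F\in\mathcal{I}$, which gives rough-$\mathcal{I}$ convergence of $\{b_n\}$ to $a$ of degree $r+c$. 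The converse is perfectly symmetric: swap the two sequences and the two limits, and use the bound $p(b_n,b_n)\leq d$ in place of $p(a_n,a_n)\leq c$.

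There is no serious obstacle here. The only subtlety is in handling (p4): the subtracted self-distance $-p(a_n,a_n)$, which would vanish in an ordinary metric, becomes (after taking absolute values) a term bounded by $c$, and this is exactly where the $+c$ enters the roughness degree. Everything else is routine bookkeeping with admissibility of $\mathcal{I}$ and the associated filter $\mathcal{F(I)}$.
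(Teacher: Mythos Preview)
Your proof is correct and follows exactly the route the paper intends: the paper itself omits the argument, simply noting that it is parallel to the proof of the preceding theorem, and your write-up is precisely that parallel adaptation---you replace the vanishing condition $p(a_n,a_n)\to 0$ by the uniform bound $p(a_n,a_n)\leq c$, adjust the $\varepsilon/3$ splitting to $\varepsilon/2$ since only two $\varepsilon$-small terms remain, and let the bound $c$ appear as the extra term in the roughness degree. Nothing further is needed.
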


\begin{proof}
The proof is parallel to the proof of the above theorem and so is omitted. 
\end{proof}

\begin{definition}
Let $(X, p)$ be a partial metric space. Then a point $c \in X$ is called an $\mathcal{I}$-cluster point of a sequence $\{x_{n}\}$ if for every $\varepsilon>0$, $\{n \in \mathbb N: |p(x_n,c)-p(c,c)| < \varepsilon \}) \notin \mathcal{I}$. The set of all 
$\mathcal{I}$-cluster points will be denoted by $\Lambda(\mathcal{I})$.
\end{definition}

\begin{theorem}
 Let $\{x_{n}\}$ be a sequence in a partial metric space $(X, p)$ and $p(x,x)=a$ for all $x \in X$, where $a$ be a real constant. If $c$ is a cluster point of $\{x_{n}\}$, then $\mathcal{I}-LIM^{r}x_{n}  \subset \overline{B^{p}_{r}}(c)$ for $r >0$.
\end{theorem}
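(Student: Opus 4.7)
The plan is to argue by contradiction. Assume there exists $y \in \mathcal{I}-LIM^{r}x_{n}$ with $y \notin \overline{B^{p}_{r}}(c)$, i.e., $p(c,y) > p(c,c) + r = a + r$. Choose a small $\varepsilon > 0$ so that the slack between $p(c,y)$ and $a+r$ exceeds $2\varepsilon$; concretely, take $\varepsilon \in \bigl(0, \tfrac{p(c,y)-a-r}{2}\bigr)$. Then I want to locate a single index $n$ for which $x_n$ simultaneously lies close to $c$ and $r+\varepsilon$-close to $y$, and use the partial-metric triangle inequality $p(c,y) \leq p(c,x_n) + p(x_n,y) - p(x_n,x_n)$ to derive $p(c,y) < a + r + 2\varepsilon$, contradicting the choice of $\varepsilon$.

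The indices are assembled as follows. Since $y \in \mathcal{I}-LIM^{r}x_{n}$, the set
\[
A = \{n \in \mathbb{N}: |p(x_n,y) - p(y,y)| \geq r+\varepsilon\} = \{n \in \mathbb{N}: |p(x_n,y) - a| \geq r+\varepsilon\}
\]
belongs to $\mathcal{I}$. Since $c$ is an $\mathcal{I}$-cluster point of $\{x_n\}$, the set
\[
B = \{n \in \mathbb{N}: |p(x_n,c) - p(c,c)| < \varepsilon\} = \{n \in \mathbb{N}: |p(x_n,c) - a| < \varepsilon\}
\]
is not in $\mathcal{I}$. Because $A \in \mathcal{I}$ and $B \notin \mathcal{I}$, the difference $B \setminus A = B \cap A^{c}$ cannot lie in $\mathcal{I}$ (otherwise $B = (B\setminus A)\cup (B \cap A) \in \mathcal{I}$), so in particular $B \cap A^{c} \neq \emptyset$. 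Pick any $n$ in this intersection.

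For this $n$, we have $p(x_n,c) < a+\varepsilon$ and $p(x_n,y) < a + r + \varepsilon$, so the triangle inequality (p4), combined with $p(x_n,x_n) = a$, gives
\[
p(c,y) \leq p(c,x_n) + p(x_n,y) - p(x_n,x_n) < (a+\varepsilon) + (a+r+\varepsilon) - a = a + r + 2\varepsilon < p(c,y),
\]
a contradiction. Hence $p(c,y) \leq a + r$, so $y \in \overline{B^{p}_{r}}(c)$, and $\mathcal{I}-LIM^{r}x_{n} \subset \overline{B^{p}_{r}}(c)$ follows.

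The only subtle step is the extraction of a common index: this is where one has to invoke the defining property of an $\mathcal{I}$-cluster point (nontriviality of $B$) against the membership of $A$ in the ideal, rather than trying to pass to an intersection of two filter sets. Everything else is a direct estimate using (p4) and the hypothesis $p(x,x)=a$, which makes both $|p(x_n,c)-p(c,c)|$ and $|p(x_n,y)-p(y,y)|$ reduce to the clean form $|p(x_n,\cdot)-a|$.
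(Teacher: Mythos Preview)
Your proof is correct and follows essentially the same contradiction argument as the paper: choose $\varepsilon$ from the gap $p(c,y)-(a+r)$, use $y\in\mathcal{I}\text{-}LIM^{r}x_n$ to get $A\in\mathcal{I}$ and the cluster-point property to get $B\notin\mathcal{I}$, pick an index in $A^{c}\cap B$, and apply (p4). The only cosmetic differences are that the paper phrases the contradiction as ``$B_{r+\varepsilon}(y)\cap B_{\varepsilon}(c)=\emptyset$ yet $x_i$ lies in both,'' whereas you go straight to the triangle-inequality estimate, and you give a cleaner justification for $A^{c}\cap B\neq\emptyset$ than the paper does.
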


\begin{proof}
Let $w \in \mathcal{I}-LIM^{r}x_{n}$ but $w \notin \overline{B^{p}_{r}}(c)$, where $\overline{B^{p}_{r}}(c)=\{y \in X: p(c,y) \leq p(c,c) + r\} =\{y \in X: p(c,y) \leq a + r\}$.
Then $a + r < p(c,w)$ and
let $\varepsilon^{'}= p(c,w) - (a+r)$.
So, $p(c,w)= \varepsilon^{'} + a + r$, where $\varepsilon^{'}>0$.
Choose $\varepsilon =\frac{\varepsilon^{'}}{2}$, so 
$p(c,w)= 2\varepsilon + a + r$.
Now, $B_{r+\varepsilon}(w) \cap B_{\varepsilon}(c) = \phi$.
For, if $B_{r+\varepsilon}(w) \cap B_{\varepsilon}(c) \neq \phi$, then there exists an element $y \in B_{r+\varepsilon}(w) \cap B_{\varepsilon}(c)$.
So, $p(w,y) < p(w,w) + r + \varepsilon = a + r + \varepsilon$  and $p(c,y) < p(c,c) + \varepsilon = a + \varepsilon$.
Now, \begin{equation*}
    \begin{split}
      p(c,w) & \leq p(c,y) + p(y,w) - p(y,y) \\
             & < \{a+\varepsilon \} + \{a+r+\varepsilon \}-a \\
             & = a+r+2\varepsilon = p(c,w), \ \text{a contradiction}.     
    \end{split}
\end{equation*}
Hence $B_{r+\varepsilon}(w) \cap B_{\varepsilon}(c) = \phi$.
Since $w \in \mathcal{I}-LIM^{r}x_{n}$, so for $\varepsilon > 0$, the set $A(\varepsilon)=\{n \in \mathbb{N}: |p(x_{n},w)-p(w,w)| \geq r+\varepsilon\} \in \mathcal{I}$.
Hence $(A(\varepsilon))^c \in \mathcal{F(I)}$.
Again, $c$ is a cluster point of $\{x_{n}\}$, so for $\varepsilon > 0$, the set $B(\varepsilon)=\{n \in \mathbb{N}: |p(x_{n},c)-p(c,c)| < \varepsilon \} \notin \mathcal{I}$.
Let $i \in (A(\varepsilon))^c \cap B(\varepsilon)$. 
Then 
$|p(x_{i},c)-p(c,c)|  < \varepsilon \Rightarrow p(x_{i},c) - p(c,c)  < \varepsilon
     \Rightarrow p(c,x_{i})   < p(c,c) + \varepsilon$.
So, $ x_{i} \in B_{\varepsilon}(c)$.
Again, 
 $|p(x_{i},w)-p(w,w)| < r+\varepsilon \Rightarrow p(x_{i},w)-p(w,w) < r+\varepsilon \Rightarrow p(w,x_{i}) < p(w,w) + r + \varepsilon$.
So, $ x_{i} \in B_{r+\varepsilon}(w)$. 
Hence $ x_{i} \in B_{r+\varepsilon}(w) \cap B_{\varepsilon}(c)$, which is a contradiction.
Therefore, $w \in \overline{B^{p}_{r}}(c)$.
\end{proof}

\subsection*{Acknowledgements}
The first author is thankful to The University of Burdwan for the grant of a senior research fellowship (State Funded) during the preparation of this article. First and second authors are also thankful to DST, Govt. of India, for providing the FIST project to the Department of Mathematics, B.U. \\

\end{document}